\newtheorem{Thm}{Theorem}{\bfseries}{\itshape}
\newtheorem*{Thm*}{Theorem}{\bfseries}{\itshape}
\newtheorem{Cor}{Corollary}{\bfseries}{\itshape}
\newtheorem{Prop}[Cor]{Proposition}{\bfseries}{\itshape}
\newtheorem{Lem}[Cor]{Lemma}{\bfseries}{\itshape}
\newtheorem*{Lem*}{Lemma}{\bfseries}{\itshape}
{\bfseries}{\itshape}
{\bfseries}{\itshape}
\newtheorem{Pblm}{Problem}{\bfseries}{\itshape}
\newtheorem{Def}[Cor]{Definition}{\bfseries}{\rmfamily}
{\scshape}{\rmfamily}
\newtheorem{Rem}[Cor]{Remark}{\scshape}{\rmfamily}
\renewcommand\ge{\geqslant} \renewcommand\le{\leqslant}
\let\tildeaccent=\~ \let\hataccent=\^
\renewcommand\~[1]{\widetilde{#1}} \renewcommand\^[1]{\widehat{#1}}
\def\<{\left<} \def\>{\right>} \def\({\left(} \def\){\right)}
\def\abs#1{\left\vert #1 \right\vert} \def\norm#1{\left\Vert #1
  \right\Vert}
\let\parasymbol=\S \def\secref#1{\parasymbol\ref{#1}}
\let\polishL=l \def\Zoladek.{\.Zol\c adek}
 \def\Im{\operatorname{Im}}
 \def\ord{\operatorname{ord}}
 \def\etc.{\emph{etc}.}
\def\:{\colon} \def\R{{\mathbb R}} \def\C{{\mathbb C}}  \def\N{{\mathbb N}}  
 \def\e{\varepsilon} \def\S{\varSigma}
\def\res{\operatornamewithlimits{Res}} \def\d{{\mathrm d}}
 \let\PolishL=\L \def\Lojas.{\PolishL ojasiewicz}
\def\cL{{\mathcal L}}
\def\rest#1{{\vert_{#1}}}
 \def\deg{\operatorname{deg}}
\def\Log{\operatorname{Log}} \def\length{\operatorname{length}}
\def\CP{{\C P}} \def\dL{{\d\Log}}
 \def\L{\mathcal{L}} 
\newcommand\mmk[1][k]{\widetilde{m}_{#1}}
\newcommand\mk[1][k]{m_{#1}}
\def\degP{d_P}
\def\degQ{d_Q}
\def\degp{d_p}
\def\degq{d_q}
\begin{document}
\setlength{\parindent}{0pt} \setlength{\parskip}{0.5em}

% +Title
\title[Bounds for cyclicity of zero solution of Abel equation]{Uniform
  upper bounds for the cyclicity of the zero solution of the Abel
  differential equation}

\author{Dmitry Batenkov}\address{Department of Computer Science,
  Technion - Israel Institute of Technology, Haifa 32000,
  Israel}\email{batenkov@cs.technion.ac.il} \author{Gal
  Binyamini}\address{Department of Mathematics, University of Toronto, Toronto,
  Canada}\email{galbin@gmail.com} \thanks{G.B. was supported by the
  Banting Postdoctoral Fellowship and the Rothschild Fellowship}

\begin{abstract}
  Given two polynomials $P,q$ we consider the following question:
  ``how large can the index of the first non-zero moment
  $\~m_k=\int_a^b P^k q$ be, assuming the sequence is not identically
  zero?''. The answer $K$ to this question is known as the moment
  Bautin index, and we provide the first general upper bound:
  $K\le2+\deg q+3(\deg P-1)^2$. The proof is based on qualitative
  analysis of linear ODEs, applied to Cauchy-type integrals of
  certain algebraic functions.

  The moment Bautin index plays an important role in the study of
  bifurcations of periodic solution in the polynomial Abel equation
  $y'=py^2+\e qy^3$ for $p,q$ polynomials and $\e\ll1$. In particular,
  our result implies that for $p$ satisfying a well-known generic
  condition, the number of periodic solutions near the zero solution
  does not exceed $5+\deg q+3\deg^2 p$. This is the first such bound
  depending solely on the degrees of the Abel equation.
\end{abstract}
%% -Title
\maketitle
\date{\today}

\section{Introduction}

\subsection{Polynomial moments and their Bautin index}
\label{sec:intro}

Throughout this paper $P,Q\in\C[z]$ will denote a pair of polynomials,
and $p,q$ their respective derivatives. We will denote the degrees of
$P,Q$ (resp. $p,q$) by $\degP,\degQ$ (resp. $\degp,\degq$). We also
fix two points $a,b\in\C$.

Two related types of moment sequences corresponding to this data have
been considered in the literature,
\begin{align}
  &\mk = \mk(P,Q) := \int_a^b P^k(z) Q(z) p(z) \d z, &\quad k=0,1,2,\dots \label{eq:moments} \\
  &\mmk = \mmk(P,q):=\int_a^b P^k(z)q(z)\d z, &\quad k=0,1,2,\dots \label{eq:tilde-moments}
\end{align}
These two sequences are related through a simple formula due to
\cite{PRY} (see~\secref{sec:moment-gen} for details), and may be
viewed essentially as different normalizations. The former
normalization is particularly convenient for the study of moment
generating functions, as explained in~\secref{sec:moment-gen}. The
latter appears more directly in the study of perturbations of the Abel
equation, as explained in~\secref{sec:abel}.

\begin{Def}
  We define the \emph{vanishing index} $N(P,Q,a,b)$ to be the first
  index $k$ such that $\mk(P,Q)\neq0$, or $\infty$ if no such $k$
  exists. We define the \emph{moment Bautin index}
  $N(\degP,\degQ,a,b)$ to be
  \begin{equation}
    N(\degP,\degQ,a,b) := \sup_{\substack{ \deg Q\le\degQ,\deg P\le\degP \\ N(P,Q,a,b)<\infty}} N(P,Q,a,b)+1,
  \end{equation}
  i.e. the least $k\in\N$ with the property that $N(P,Q,a,b)\ge k$
  implies $N(P,Q,a,b)=\infty$ for any $P,Q$ with $\deg P\le\degP$ and
  $\deg Q\le\degQ$.

  We define $\~N(P,q,a,b)$ and $\~N(\degP,\degq,a,b)$ analogously.
\end{Def}

\begin{Rem}
  The moments $\mk(P,Q)$ are polynomials in the coefficients of $P,Q$.
  Let $R$ denote the ring of polynomials in these coefficients and
  $I_k\subset R$ denote the ideal by $m_0,\ldots,m_k$. Then
  $N(\degP,\degQ,a,b)$ defined above is the first index for which the
  chain $\{\sqrt{I_k}\}_{k\in\N}$ stabilizes. In particular, from
  noetherianity it follows that this index is well-defined (finite).
  An analogous remark holds for $\~N(\degP,\degq,a,b)$.
\end{Rem}

The moment Bautin index has been studied in various special cases,
motivated primarily by its relation to perturbations of the Abel
equation (see~\secref{sec:abel} for an overview). Bounds have been
obtained in various special cases, including the cases $\degP=2,3$. We
refer the reader to \cite{blinov_local_2005, briskin_tangential_2005}
and references therein for details. However, to our knowledge no
general bound has been available. Our main result is the following
general bound for the moment Bautin index.

\begin{Thm}\label{thm:main-thm}
  For any $\degP,\degQ\in\N$ we have
  \begin{equation}\label{eq:main-bound}
    N(\degP,\degQ,a,b) \le \degQ+3(\degP-1)^2.
  \end{equation}
  Similarly, for any $\degP,\degq$ we have
  \begin{equation}
    \~N(\degP,\degq,a,b) \le 2+\degq+3(\degP-1)^2.
  \end{equation}
\end{Thm}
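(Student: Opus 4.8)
The plan is to reduce the $\mmk$ bound to the $\mk$ bound, pass to the moment generating function, recognize the vanishing index as an order of vanishing of a solution of a Fuchsian ODE, and then control that order by the qualitative theory of such equations. First I would reduce the second inequality to the first. Writing $Q$ for an antiderivative of $q$, so that $\degQ=\degq+1$, integration by parts gives $\mmk=[P^kQ]_a^b-k\,\mk[k-1](P,Q)$. The boundary term is a combination of the geometric sequences $P(a)^k$ and $P(b)^k$, whose generating function is rational with only two poles; hence the vanishing index of $(\mmk)$ exceeds that of $(\mk(P,Q))$ by at most an absolute constant. So $\~N\le2+\degq+3(\degP-1)^2$ follows from $N\le\degQ+3(\degP-1)^2$ with $\degQ=\degq+1$, and it suffices to bound $N(\degP,\degQ,a,b)$.

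Next I would introduce the moment generating function
\[
  M(s)=\sum_{k\ge0}\frac{\mk}{s^{k+1}}=\int_a^b\frac{Q(z)\,p(z)}{s-P(z)}\,\d z ,
\]
convergent for large $s$. The substitution $w=P(z)$ presents it as a Cauchy-type integral $\int_\gamma \phi(w)(s-w)^{-1}\,\d w$ of the algebraic function $\phi=Q\circ P^{-1}$ along $\gamma=P([a,b])$, the branch being fixed by the path. The key translation is that $(\mk)$ has vanishing index $\ge K$ precisely when $M$ is holomorphic at $s=\infty$ with $\ord_\infty M\ge K+1$; so bounding $N$ is the same as bounding the order of vanishing at $\infty$ of a nonzero $M$.

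The analytic heart is to exhibit a linear ODE $LM=0$ whose order and coefficient degrees are controlled by $\degP,\degQ$. Since $\phi$ is algebraic of degree $\degP$ over $\C(w)$, its Cauchy transform lies in the Picard--Fuchs class: $M$ is a component of a solution of a Fuchsian system of rank $\sim\degP-1$, singular exactly at the $\le\degP-1$ critical values $c_1,\dots,c_{\degP-1}$ of $P$ and at $s=\infty$ (the endpoint values $P(a),P(b)$ being absorbed by the reduction above or treated as apparent singularities). The local exponents at $\infty$ are governed by the weight of the form $Q(z)p(z)\,\d z$, hence by $\degQ$.

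Finally I would bound $\ord_\infty M$ by the qualitative theory of zeros of solutions of Fuchsian equations: the order of vanishing of a holomorphic solution at a singular point exceeds the largest local exponent there by no more than a quantity controlled by the rank of the system and the number of its singular points. Combining the exponent contribution $\le\degQ$ at $\infty$ with a resonance count over the $\degP-1$ critical values — which enters as a product of the rank ($\sim\degP-1$) and the number of singular points ($\sim\degP-1$), and so quadratically — yields $\ord_\infty M\le\degQ+3(\degP-1)^2+1$, and the theorem. The main obstacle is exactly this combination of the last two steps: producing the Picard--Fuchs equation with sharp control on its order, on the degrees of its coefficients, and on whether each singularity is genuine or apparent, since the precise constant $3(\degP-1)^2$ is the output of an accurate global zero-count. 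Controlling the behaviour at $\infty$ and at the endpoint and critical singularities — including possible logarithmic resonances — without degrading this constant is where the real difficulty lies.
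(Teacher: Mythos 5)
Your overall architecture matches the paper's: the reduction of the $\mmk$ bound to the $\mk$ bound (your integration by parts $\mmk=[P^kQ]_a^b-k\,m_{k-1}$ is exactly the paper's Lemma, which proves $\~m_{k+1}=-(k+1)m_k$ after normalizing $Q(a)=Q(b)=0$, whence $\~N\le N+1$ and the constant $2$), the passage to the generating function as a Cauchy-type integral of $Q\circ P^{-1}$, and a Fuchsian zero-count at $t=\infty$. But there is a genuine gap at the analytic heart. You posit a \emph{homogeneous} equation $LM=0$ singular only at the critical values of $P$ and at $\infty$, with $P(a),P(b)$ ``absorbed by the reduction or treated as apparent singularities.'' This is false: the endpoints of the contour $\gamma=P([a,b])$ are genuine branch points of the Cauchy transform $M$ (the jump across $\gamma$ is $Q\circ P^{-1}\not\equiv0$), so no such operator annihilates $M$, and nothing in the $\mmk$-to-$\mk$ reduction removes them. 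The paper's route is structurally different precisely here: the operator $\cL$ (order $r\le\degP$, built from Wronskians of the branches of $Q\circ P^{-1}$) annihilates the \emph{algebraic function}, not $M$, and Kisunko's theorem gives the \emph{non-homogeneous} equation $\cL M=R$ with $R$ rational, with poles only at the two endpoints, each of order at most $r$. The endpoint singularities thus enter the final count as the bound $\ord_\infty R\le 2r$, rather than being wished away.

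Second, your key zero-counting principle is not a theorem as stated and hides the paper's pivotal step. For a Fuchsian equation the vanishing order of a nonzero holomorphic-at-$\infty$ solution is indeed a local exponent there, so everything reduces to excluding a \emph{large integer exponent} at $\infty$ --- equivalently, to showing $M$ is not annihilated by $\cL$ with high-order vanishing. The paper does this via monodromy (its Proposition on $\cL H\not\equiv0$): if $\cL M\equiv0$ then $M\in V$, and invariance under the cyclic monodromy of $P^{-1}$ at $\infty$ forces $M$ into the one-dimensional image of the averaging operator $T_\infty$, spanned by the polynomial $S(t)=\sum_{P(w)=t}Q(w)$; a polynomial vanishing at $\infty$ is zero. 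Your sketch contains no counterpart of this argument, and your ``resonance count'' heuristic cannot replace it. Relatedly, the constant $3(\degP-1)^2$ is asserted in your sketch by a rank-times-singularities heuristic, whereas in the paper it is derived by explicit divisor estimates on the Wronskian $W_r$: lower bounds on $\ord_{p_i}W_r$ via the ramification numbers $b_i$ with $\sum b_i=\degP-1$, the Puiseux estimate $\ord_\infty W_r\ge -r\degQ/\degP+r(r-1)/2$, the relation $\deg[W_r]=0$, and Fuchs' theorem bounding the pole order of $c_r$ at each $p_i$ by $r$. These three ingredients --- the non-homogeneous equation with controlled right-hand side, the monodromy nonvanishing argument, and the Wronskian divisor bookkeeping --- are exactly what your proposal is missing, and the proof does not close without them.
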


It is shown in \secref{sec:moment-gen} (following \cite{PRY}) that the
second bound in Theorem~\ref{thm:main-thm} follows immediately from
the first. Our approach to the proof of the first bound is based on
the following two observations:
\begin{enumerate}
\item The vanishing index $N(P,Q,a,b)$ is essentially equivalent to
  the order of the zero at $t=\infty$ of the moment generating
  function $H(t)$ for the moment sequence $\{\mk\}$. It turns out
  \cite{PRY} that $H(t)$ admits an analytic expression as a Cauchy
  type integral for the algebraic function
  $Q(P^{-1}(z))$. %% TODO: IS THIS THE ORIGINAL CITATION?
\item The Cauchy type integral above satisfies a (non-homogeneous)
  linear differential equation of Fuchsian type \cite{kisunko}.
\end{enumerate}
The problem of estimating $N(P,Q,a,b)$ is thus reduced to the study of
the order of zero at $t=\infty$ of solutions of certain Fuchsian
differential equations. A detailed analysis of the Fuchsian
differential operator involved, and elementary considerations
concerning its monodromy, allow us to give an a-priori upper bound for
this order of zero, thus proving Theorem~\ref{thm:main-thm}.

\subsection{Perturbations of the Abel equation}
\label{sec:abel}

The classical Hilbert's 16th problem asks for bounding the number of
limit cycles, i.e. isolated closed trajectories, of the polynomial
vector field
\begin{align}\begin{split}\label{eq:poly-field}
    \frac{\d x}{\d t} &= -y+F(x,y), \\
    \frac{\d y}{\d t} &= x+G(x,y).\end{split}
\end{align}
The closely related \emph{Poincar\'e Center-Focus Problem} asks for
explicit conditions on the polynomials $F,G$ in order for the system
\eqref{eq:poly-field} to have a center. These problems remain widely
open, although during the years many partial results have been
obtained (see \cite{IY:book} for an exposition).

An alternative context for the study of the problems above is provided
by the Abel differential equation,
\begin{equation}\label{eq:abel}
  y'=p(x)y^2 + q(x) y^3, \quad x\in[a,b] \subset \R,
\end{equation}
where $p,q$ can be polynomials, trigonometric polynomials or even
analytic functions \cite{neto_number_1980}. A periodic solution in
this context corresponds to solution $y(x)$ satisfying $y(a)=y(b)$,
and a center corresponds to an Abel equation where every solution with
a sufficiently small initial condition is periodic. The Abel equation analogue of the Hilbert 16th problem, known as the Smale-Pugh problem, is to bound the number of periodic solutions of \eqref{eq:abel} in terms of the degrees of $p$ and $q$. It is generally
believed that some (but not all) of the essential difficulties in the
study of~\eqref{eq:poly-field} can be observed in~\eqref{eq:abel},
even when one restricts to the case of polynomial
coefficients. %% TODO: SAY SOMETHING ABOUT CHERKAS?
On the other hand, the polynomial Abel equation allows for several
important technical simplifications, and significant progress has been
achieved for the Center-Focus in this context using tools from
polynomial composition algebra and algebraic geometry
\cite{briskin_center_2010, pakovich_solution_2014}.

The Smale-Pugh problem for the polynomial Abel equation remains open.
Its infinitesimal version, first suggested in
\cite{briskin_tangential_2005}, is as follows:
\begin{Pblm}\label{prb:inf-SP}
  How many periodic solutions can a small perturbation
  \begin{equation}\label{eq:abel-inf}
    y' = p(x) y^2 + \varepsilon q(x) y^3,\quad x\in\left[a,b\right]
  \end{equation}
  of the ``integrable'' equation $y'=p(x)y^2$ have?
\end{Pblm}
This is an Abel equation analog of the ``Infinitesimal Hilbert 16th
problem'' for which an explicit bound was obtained in
\cite{binyamini_number_2010}. Following \cite{briskin_tangential_2005}, in this paper we focus our attention on the periodic solutions bifurcating from \emph{the zero solution} of \eqref{eq:abel-inf}.

The unperturbed equation ($\e=0$) is a center if and only if
$\int_a^bp(x)\d x=0$. Thus we may choose the primitive $P$ such that
$P(a)=P(b)=0$. As in the classical case, the study of the bifurcation
of periodic solutions as well as the center conditions for the
perturbation~\eqref{eq:abel-inf} begins with the study of the first
variation of the Poincaré map.

For technical reasons it is customary to consider the ``reverse'' map
from time $x=b$ to time $x=a$. Namely, let $G(y):(\C,0)\to(\C,0)$
denote the germ of the analytic map assigning to each initial
condition $y_b$ the value $G(y_b)=\eta(a)$, where $\eta$ is a solution
of~\eqref{eq:abel-inf} satisfying $\eta(b)=y_b$. We may view $G$ as a
germ of an analytic function in the coefficients of the polynomials
$p,q$ and $\e$ as well. Fixed points of $G$ correspond to periodic
solutions, and the identical vanishing of $G(y)$ corresponds to a
center. An explicit computation
\cite[Proposition~4.1]{briskin_tangential_2005} gives the expansion
\begin{equation}\label{eq:first-var}
  \frac{\d}{\d\e}\big|_{\e=0} G(y) = -y^3 \int_a^b \frac{q(x)}{1-yP(x)}\d x = \sum_{k=0}^\infty \mmk y^{k+3}.
\end{equation}
 
As in the classical study of perturbation of Hamiltonian planar
systems, it follows from this variational computation that the number
of periodic solutions bifurcating from the zero solution
of~\eqref{eq:abel-inf} is bounded by the order of zero of the right
hand side, i.e. $\~N(P,q,a,b)+3$, assuming that this number is finite.
On the other hand, if the first variation vanishes identically then
one must in general consider higher order variations in $\e$,
further complicating the study of bifurcating periodic solutions.

A surprising feature of the Abel equation~\eqref{eq:abel-inf} is that
for many polynomials $p$, the vanishing of the first
variation~\eqref{eq:first-var} automatically implies the identical
vanishing of the Poincaré map. Toward this end we recall the following
definition.

\begin{Def}[\cite{blinov_local_2005}]
  The polynomials $P,Q$ are said to satisfy the composition condition
  (PCC) on $[a,b]$ if there exists a polynomial $W(x)$ with
  $W(a)=W(b)$, and polynomials $\tilde{P},\tilde{Q}$ such that
  \[
  P(x)=\tilde{P}(W(x)),\qquad Q(x)=\tilde{Q}(W(x)).
  \]

  A polynomial $P$ is called ``definite'' (w.r.t $a,b$), if for any
  polynomial $Q$, vanishing of all the moments $\mmk(P ,q)$ implies
  PCC for $P,Q$.
\end{Def}
Definite polynomials are ubiquitous. In the deep works
\cite{pakovich_solution_2007,pakovich_solution_2014} all
counter-examples have been classified and shown to admit a rigid
algebraic structure.

Whenever the polynomials $P,Q$ satisfy the PCC, the corresponding Abel
equation~\eqref{eq:abel} automatically admits a center, as can be seen
by a simple change of variable argument. We thus see that for a
definite polynomial $P$, the vanishing of all moments $\mmk(P,q)$
implies the \emph{identical} vanishing of the Poincaré map $G(y)$.
Therefore, in a sense the bifurcation of periodic solutions
in~\eqref{eq:abel-inf} is fully controlled by the first
variation~\eqref{eq:first-var}. More formally, the following holds.

\begin{Thm}[\cite{blinov_local_2005}]\label{thm:bautin-index-and-cyclicity}
  Let $P$ be a definite polynomial, and fix the parameters $a,b,d_q$.
  Then for any $\norm{q}\ll1$ with $\deg q\le d_q$, the number of periodic
  solutions of~\eqref{eq:abel} with $|y(a)|\ll 1$ is at most $\~N(\degP,\degq,a,b)+3$.
\end{Thm}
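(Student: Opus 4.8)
The plan is to run a Bautin-type zero-counting argument on the displacement map, in which definiteness of $P$ enters to force the center locus to be a \emph{linear} subspace. Consider the displacement $\delta(y;q):=G(y)-y=\sum_{j\ge3}a_j(q)\,y^{j}$, whose coefficients $a_j$ are germs at $q=0$ of analytic functions of the coefficients of $q$, all vanishing at $q=0$ since $q=0$ is a center. The expansion starts at $y^3$: the zero solution $y\equiv0$ persists (so $a_0=0$), the multiplier of \eqref{eq:abel} along $y=0$ is $1$ (so $a_1=0$), and the $y^2$–coefficient equals $\int_a^bp=P(b)-P(a)=0$ (so $a_2=0$). By the variational formula \eqref{eq:first-var} the $q$–linear part of $a_{k+3}$ is precisely $\mmk(P,q)$, itself a linear functional of $q$; thus $a_j(q)=\mmk[j-3](P,q)+O(\norm{q}^2)$ for $j\ge3$. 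Periodic solutions with $|y(a)|\ll1$ are the small zeros of $\delta(\cdot;q)$, so the task is to bound their number uniformly for $\norm{q}\ll1$, $\deg q\le\degq$.

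First I would determine the common zero locus of all the $a_j$. Put $Z:=\{q:\mmk(P,q)=0\text{ for all }k\}$; being the common kernel of the linear functionals $\mmk(P,\cdot)$ on the finite–dimensional space $\{\deg q\le\degq\}$, it is a linear subspace. Writing $K:=\~N(\degP,\degq,a,b)$, the defining property of the moment Bautin index is exactly that $\mmk[0]=\cdots=\mmk[K-1]=0$ already forces all moments to vanish, so $Z=\{\mmk[0]=\cdots=\mmk[K-1]=0\}$ is cut out by these $K$ linear forms. The crucial use of definiteness is now this: for $q\in Z$ all moments vanish, hence $P,Q$ satisfy the PCC, hence \eqref{eq:abel} is a center, hence $\delta(\cdot;q)\equiv0$, i.e. $a_j(q)=0$ for every $j$. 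Thus each $a_j$ vanishes identically on the linear subspace $Z$.

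Because $Z$ is linear, its ideal $I(Z)$ in the local ring $\cO$ of analytic germs at $q=0$ is generated by the linear forms $\mmk[0],\dots,\mmk[K-1]$, and $I(Z)/\mathfrak m\,I(Z)$ is spanned by their classes. Each $a_j\in I(Z)$, and since the linear parts of $a_3,\dots,a_{K+2}$ are $\mmk[0],\dots,\mmk[K-1]$, Nakayama's lemma shows that $a_3,\dots,a_{K+2}$ generate $I(Z)$; in particular the Bautin ideal $(a_j)_{j\ge3}$ coincides with $I(Z)=(a_3,\dots,a_{K+2})$. Expressing each $a_j$ with $j>K+2$ as an $\cO$–combination of $a_3,\dots,a_{K+2}$ and regrouping the series produces a factorization $\delta(y;q)=\sum_{i=3}^{K+2}a_i(q)\,\Phi_i(y;q)$ with $\Phi_i(y;q)=y^{i}(1+O(y))$, so that the $\Phi_i$ span a space of analytic functions in which every nonzero element vanishes at $y=0$ to an order lying in $\{3,\dots,K+2\}$. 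A standard Bautin/Chebyshev zero-count (uniform in $q$, using that the relevant Wronskian vanishes only at $y=0$ and to finite order) then bounds the number of zeros of $\delta(\cdot;q)$ in a fixed small disk uniformly in $q$, yielding the asserted bound $\~N(\degP,\degq,a,b)+3$.

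The heart of the matter is the passage from a statement about zero sets to a statement about ideals. Merely knowing that the $a_j$ and the moments have the same common zero locus $Z$ would give equality of \emph{radicals}, which is not enough to produce the factorization above. What rescues the argument is that definiteness identifies $Z$ with the moment–vanishing locus, and the latter is \emph{linear}, hence smooth and reduced; this is precisely what upgrades ``$a_j$ vanishes on $Z$'' to the membership $a_j\in(\mmk[0],\dots,\mmk[K-1])$ and lets Nakayama extract a generating set of controlled order. Everything else—the persistence of the factorization for all small $q$, and the uniform zero-count—is classical, and finiteness (Noetherianity of $\cO$, finite dimension of $\{\deg q\le\degq\}$) keeps the bound uniform over the whole family.
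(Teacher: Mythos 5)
The paper contains no proof of this theorem: it is imported verbatim from \cite{blinov_local_2005} (the cyclicity machinery is precisely what the present paper \emph{uses}, not what it proves), so there is no internal argument to compare against. Judged on its own merits, your reconstruction is correct in outline and is essentially the argument of the cited reference: definiteness identifies the center locus with the moment-vanishing locus $Z$; linearity of $q\mapsto\mmk(P,q)$ makes $Z$ a linear subspace with $I(Z)=(\mmk[0],\dots,\mmk[K-1])$ in the local analytic ring; the variational formula \eqref{eq:first-var} says the linear parts of $a_3,\dots,a_{K+2}$ are exactly these moments, so Nakayama promotes them to generators of the Bautin ideal; and the factorization-plus-compactness count finishes. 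Your diagnosis of the crux is also right --- it is the smoothness/reducedness of $Z$ (linearity) that upgrades equality of zero sets to ideal membership, which a radical-only argument could never do. Three remarks. First, the one genuinely delicate analytic point you defer to ``classical'' is the division with \emph{uniform} bounds: to regroup $\sum_j a_j(q)y^j$ into $\sum_{i=3}^{K+2}a_i(q)\Phi_i(y;q)$ convergent on a fixed polydisk you need $a_j=\sum_i h_{ji}a_i$ with $\sup_q\abs{h_{ji}}\le CR^j$; this is Roussarie's factorization lemma (equivalently, closedness of ideals in $\cO$ plus the open mapping theorem in the Banach algebra of bounded holomorphic functions), and a full write-up must cite it precisely rather than gesture at ``regrouping''. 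Second, a minor point: the forms $\mmk[0],\dots,\mmk[K-1]$ need not be linearly independent, but the Nakayama step survives since their classes still span $I(Z)/\mathfrak{m}\,I(Z)$, so $a_3,\dots,a_{K+2}$ still generate. Third, your count in fact yields at most $K+2$ small zeros of $\delta(\cdot;q)$ counted with multiplicity (the zero solution itself absorbing multiplicity at least $3$), which sits comfortably within the stated bound $\~N(\degP,\degq,a,b)+3$.
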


As a Corollary of Theorem~\ref{thm:main-thm} we therefore have the
following first general estimate for the number of limit cycles near
the zero solution for an Abel equation \eqref{eq:abel} with $\norm{q}$
small.

\begin{Cor}\label{cor:main}
  Under the conditions of
  Theorem~\ref{thm:bautin-index-and-cyclicity}, the number of periodic
  solutions is bounded by $5+\degq+3\degp^2$.
\end{Cor}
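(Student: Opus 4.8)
The plan is to derive Corollary~\ref{cor:main} as a direct combination of Theorem~\ref{thm:main-thm} and Theorem~\ref{thm:bautin-index-and-cyclicity}, since both of the ingredients have already been established (or assumed) by this point in the paper. The corollary is stated "under the conditions of Theorem~\ref{thm:bautin-index-and-cyclicity}", so I may take for granted that $P$ is a definite polynomial with $P(a)=P(b)=0$, that the parameters $a,b,\degq$ are fixed, and that $\norm{q}\ll1$ with $\deg q\le\degq$. Under these hypotheses, Theorem~\ref{thm:bautin-index-and-cyclicity} already tells me that the number of periodic solutions of~\eqref{eq:abel} with $|y(a)|\ll1$ is at most $\~N(\degP,\degq,a,b)+3$. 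So the only remaining task is to bound the moment Bautin index $\~N(\degP,\degq,a,b)$ explicitly in terms of the degrees, which is precisely what the second inequality of Theorem~\ref{thm:main-thm} provides.

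The key step is simply the substitution. From the second bound in Theorem~\ref{thm:main-thm},
\begin{equation*}
  \~N(\degP,\degq,a,b) \le 2+\degq+3(\degP-1)^2.
\end{equation*}
Adding $3$ to both sides, the cyclicity bound from Theorem~\ref{thm:bautin-index-and-cyclicity} becomes
\begin{equation*}
  \~N(\degP,\degq,a,b)+3 \le 5+\degq+3(\degP-1)^2.
\end{equation*}
The one point requiring care is the relationship between $\degP$ and $\degp$: since $p$ is by definition the derivative of $P$, we have $\degp=\degP-1$, so that $3(\degP-1)^2=3\degp^2$. Making this replacement turns the right-hand side into $5+\degq+3\degp^2$, matching the statement of the corollary exactly.

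I do not anticipate any genuine obstacle here, as this is a bookkeeping step rather than a substantive argument; the mathematical content lives entirely in the two theorems being invoked. The only subtlety worth flagging explicitly is the degree bookkeeping $\degp=\degP-1$, which is why the bound is naturally phrased in terms of $\degp$ (the degree of the Abel equation's coefficient) rather than $\degP$. If one wanted to be completely scrupulous, one might also note that Theorem~\ref{thm:main-thm}'s bound on $\~N$ is uniform over all admissible $P$ of degree at most $\degP$ and all $q$ of degree at most $\degq$, so it applies in particular to the fixed definite $P$ and to the perturbation polynomial $q$ appearing in the corollary's hypotheses; thus no loss of generality is incurred in the specialization.
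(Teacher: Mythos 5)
Your proof is correct and is exactly the paper's (implicit) argument: the corollary is obtained by substituting the bound $\~N(\degP,\degq,a,b)\le 2+\degq+3(\degP-1)^2$ from Theorem~\ref{thm:main-thm} into the cyclicity bound $\~N(\degP,\degq,a,b)+3$ of Theorem~\ref{thm:bautin-index-and-cyclicity}, using $\degp=\degP-1$. Your flagged bookkeeping points (the degree shift and the uniformity of the bound over admissible $P,q$) are the only content, and you handle them correctly.
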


\subsection{Organization of the paper}

In~\secref{sec:moment-gen} we introduce moment generating functions
for the two moment sequences $\{\mk\},\{\mmk\}$ which turn out to be
Cauchy-type integrals. We thus reduced the study of the corresponding
vanishing indices to the study of the order of zero of these
generating functions at infinity. In~\secref{sec:cauchy-int} we give a
slightly generalized version of the result of \cite{kisunko} which
states that if a function $g(z)$ satisfies a linear ODE $\cL g=0$ then
the corresponding Cauchy-type integral $I(t)$ satisfies a
non-homogeneous linear ODE $\cL I=R$, where $R$ is a rational function
of known degree. Subsequently, in~\secref{sec:diff-op} we explicitly
derive the corresponding non-homogeneous ODE for the moment generating
functions. Finally in~\secref{sec:order-zero} we produce estimates for
the order of zero the moment generating function at infinity using
qualitative methods of linear ODEs.

\section{Polynomial moments and generating functions}
%\label{sec:preliminary}
\label{sec:moment-gen}

Recall the notations of~\secref{sec:intro}. We introduce moment
generating functions with the corresponding integral expression for
the sequences $\{\mk\},\{\mmk\}$ as follows:
\begin{align}
  H(t) = \sum_{k=0}^\infty m_k t^{-(k+1)} \qquad H(t) = \int_a^b \frac{Q(z) p(z)}{t-P(z)}\d z, \label{eq:mk-gen} \\
  \~H(t) = \sum_{k=0}^\infty \~m_k t^{-(k+1)} \qquad \~H(t) = \int_a^b \frac{q(z)}{t-P(z)}\d z. \label{eq:mmk-gen}
\end{align}
Clearly,
\begin{equation}
  \ord_\infty H(t) = N(P,Q,a,b)+1 \qquad \ord_\infty \~H(t) = \~N(P,q,a,b)+1.
\end{equation}
In particular, we have the following.
\begin{Prop}\label{prop:reduction-to-order-of-I}
  We have
  \begin{equation}
    N(\degP,\degQ,a,b) = \sup_{H(t)\not\equiv 0} \ord_{t=\infty} H(t).
  \end{equation}
  where the supremum is taken over all pairs $P,Q$ with respective
  degrees bounded by $\degP,\degQ$ and $H(t)$ denotes the
  corresponding moment generating function.
\end{Prop}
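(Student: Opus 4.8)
The plan is to read off the proposition directly from the per-pair identity $\ord_{t=\infty}H(t)=N(P,Q,a,b)+1$ recorded just above, combined with the observation that the two domains over which the relevant suprema range are in fact the same set of pairs. First I would fix a single admissible pair $(P,Q)$, meaning $\deg P\le\degP$ and $\deg Q\le\degQ$, and examine its generating function $H(t)=\sum_{k\ge0}\mk t^{-(k+1)}$ near $t=\infty$. Since this Laurent series is the expansion of $H$ at infinity, $H$ vanishes identically there exactly when every coefficient $\mk$ vanishes; by definition of the vanishing index this is precisely the case $N(P,Q,a,b)=\infty$. Consequently the constraint ``$H(t)\not\equiv0$'' in the right-hand supremum is equivalent to ``$N(P,Q,a,b)<\infty$'', which is the constraint defining the moment Bautin index on the left.

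Next, for any such pair with $H\not\equiv0$ I would set $k_0:=N(P,Q,a,b)$, the first index with a non-vanishing moment. The lowest-order term of the Laurent expansion at infinity is then $\mk[k_0]\,t^{-(k_0+1)}$, whence $\ord_{t=\infty}H(t)=k_0+1=N(P,Q,a,b)+1$. This is the only actual computation in the argument, and it merely reconfirms the identity displayed before the proposition.

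Finally I would take suprema over the common index set $\{(P,Q):\deg P\le\degP,\ \deg Q\le\degQ,\ N(P,Q,a,b)<\infty\}$. Because on this set the two quantities being maximized coincide term by term, and the additive constant may be pulled outside the supremum, one obtains
\[
\sup_{H\not\equiv0}\ord_{t=\infty}H(t)=\sup_{N(P,Q,a,b)<\infty}\bigl(N(P,Q,a,b)+1\bigr)=N(\degP,\degQ,a,b),
\]
the last equality being the definition of the moment Bautin index.

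I do not expect any genuine obstacle: the statement is essentially a restatement of the definition of $N(\degP,\degQ,a,b)$ in the language of the generating function. The only points requiring care are purely organizational, namely verifying that ``$H\not\equiv0$'' and ``$N(P,Q,a,b)<\infty$'' carve out identical sets of pairs so that the two suprema have the same domain, and noting that $\sup(N+1)=\sup(N)+1$. Both are immediate.
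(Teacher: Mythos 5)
Your proposal is correct and is essentially the paper's own argument: the paper states this proposition as an immediate consequence of the displayed identity $\ord_\infty H(t)=N(P,Q,a,b)+1$, and your write-up simply makes explicit the two bookkeeping points (that $H\not\equiv0$ and $N(P,Q,a,b)<\infty$ define the same set of pairs, and that the $+1$ passes through the supremum). Nothing is missing and nothing differs in substance.
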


It turns out that $H(t)$ and $\~H(t)$ are related by a simple formula,
which implies in particular that the study of their orders of
vanishing at $t=\infty$ are essentially the same
\cite[Claim,~p.40]{PRY}. We repeat the argument of \cite{PRY} in order
to obtain an explicit description of relation between these orders.

\begin{Lem}\label{lem:mk-vs-mmk}
  The condition $\~m_0=0$ is equivalent to $Q(a)=Q(b)$. Moreover,
  under this condition we have $\~m_{k+1}=-(k+1)m_k$ for $k\in\N$.
  In particular, we have
  \begin{equation}
    \~N(P,q,a,b) \le N(P,Q,a,b)+1.
  \end{equation}
\end{Lem}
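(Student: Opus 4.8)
The plan is to establish the three assertions in order, using the fundamental theorem of calculus, a single integration by parts, and an elementary comparison of vanishing indices. First I would treat the characterization of $\mmk[0]=0$: since $q=Q'$, the fundamental theorem of calculus gives $\mmk[0]=\int_a^b q(z)\,\d z = Q(b)-Q(a)$, so that $\mmk[0]=0$ if and only if $Q(a)=Q(b)$.

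For the recursion I would write $\mmk[k+1]=\int_a^b P^{k+1}(z)Q'(z)\,\d z$ and integrate by parts with $u=P^{k+1}$ and $\d v = Q'\,\d z$. The integral term becomes $-(k+1)\int_a^b P^k(z)Q(z)p(z)\,\d z = -(k+1)\mk$, while the boundary contribution is $P^{k+1}(b)Q(b)-P^{k+1}(a)Q(a)$. Under the standing normalization $P(a)=P(b)$ (we take $P(a)=P(b)=0$) combined with the hypothesis $Q(a)=Q(b)$, equivalently $\mmk[0]=0$, this boundary term vanishes for every $k\ge 0$, which yields $\mmk[k+1]=-(k+1)\mk$ as claimed.

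For the index inequality I would translate these identities into statements about the first nonzero moment. If $\mmk[0]\neq 0$ then $\~N(P,q,a,b)=0$ and the bound $\~N(P,q,a,b)\le N(P,Q,a,b)+1$ is immediate. If instead $\mmk[0]=0$, then since the factor $-(k+1)$ never vanishes, $\mmk[k+1]=0$ holds exactly when $\mk=0$; thus the truncated sequence $\{\mmk[k+1]\}_{k\ge0}$ has the same vanishing pattern as $\{\mk\}_{k\ge 0}$, and the first index at which $\tilde m$ is nonzero is precisely $N(P,Q,a,b)+1$ (with both sides equal to $\infty$ when no moment is nonzero), giving equality in this case.

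The argument is essentially a routine computation, so I do not expect a serious obstacle; the only points demanding care are the role of the endpoint conditions — it is the combination of the normalization $P(a)=P(b)$ with the derived relation $Q(a)=Q(b)$ that makes the boundary term in the integration by parts disappear for all $k$ simultaneously — and the bookkeeping of the degenerate cases $\mmk[0]\neq 0$ and the identically vanishing moment sequence, which must be checked to confirm the inequality (rather than strict equality) in full generality.
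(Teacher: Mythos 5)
Your argument is correct, and at its core it is the paper's own computation carried out termwise rather than through generating functions: the paper differentiates $H(t)$ under the integral sign, integrates by parts once to get $H'(t)=\frac{Q(a)}{t-P(a)}-\frac{Q(b)}{t-P(b)}+\~H(t)$, and reads off Laurent coefficients at $t=\infty$; your integration by parts in $\mmk[k+1]=\int_a^b P^{k+1}(z)\,q(z)\,\d z$ is precisely the coefficient-of-$t^{-(k+2)}$ instance of that identity. The one genuine difference is that you keep the boundary term $P^{k+1}(b)Q(b)-P^{k+1}(a)Q(a)$ visible and correctly observe that $Q(a)=Q(b)$ alone does not kill it, whereupon you invoke $P(a)=P(b)=0$. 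Your caution here is vindicated: the paper's proof silently needs the same thing, since the $t^{-(k+2)}$ coefficients of its boundary terms are $Q(a)P(a)^{k+1}-Q(b)P(b)^{k+1}$, which vanish for all $k$ only if in addition $P(a)=P(b)$ or $Q(a)=Q(b)=0$. Without some such condition even the final inequality fails: take $P(z)=z$ on $[0,1]$ and $Q(z)=z(z-1)(z-\tfrac12)+1$; then $\mk[0]=1\neq0$, so $N(P,Q,a,b)=0$, while $\mmk[0]=\mmk[1]=0$ and $\mmk[2]=\tfrac{1}{60}$, so $\~N(P,q,a,b)=2>N(P,Q,a,b)+1$.

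Two remarks on your fix. First, calling $P(a)=P(b)=0$ a ``standing normalization'' is an overstatement: the paper imposes it only in the Abel-equation discussion of \secref{sec:abel}, not in the moment-problem setting of \secref{sec:moment-gen} where the Lemma lives. Second, it is stronger than necessary: since the $\mmk$ depend only on $q$, once $\mmk[0]=0$ one may replace $Q$ by $Q-Q(a)$, i.e.\ normalize $Q(a)=Q(b)=0$, which annihilates your boundary term for every $k$ with no hypothesis on $P(a),P(b)$; this yields the recursion, the equality you note in the $\mmk[0]=0$ case, and the bound on $\~N(\degP,\degq,a,b)$ needed in the proof of Theorem~\ref{thm:main-thm}, with $N(P,Q,a,b)$ understood for this normalized primitive. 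With that reading, your handling of the degenerate cases ($\mmk[0]\neq0$, identically vanishing sequences) is complete.
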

\begin{proof}
  Derivating under the integral sign we have
  \begin{multline}
    \frac{\d H(t)}{\d t} = -\int_a^b \frac{Q(z) p(z)}{(t-P(z))^2}\d z
    = -\int_a^b Q \d\(\frac{1}{t-P(z)}\)  \\
    = -\big[\frac{Q(z)}{t-P(z)}\big]_a^b + \int_a^b
    \frac{q(z)}{t-P(z)}\d z = \frac{Q(a)}{t-P(a)} -
    \frac{Q(b)}{t-P(b)} + \~H(t)
  \end{multline}
  Comparing the $t^{-1}$ coefficient we see that $\~m_0=0$ if and only
  if $Q(a)=Q(b)$, and under this condition $\~m_{k+1}=-(k+1)m_k$ as
  claimed.
\end{proof}

The moment generating function~\eqref{eq:mk-gen} has the form of a
Cauchy integral. Indeed, choose the curve of integration $\gamma'$
from $a$ to $b$ in~\eqref{eq:mk-gen} to be some smooth curve avoiding
the critical values of $P(z)$ (except perhaps at the endpoints). Then
setting $\gamma=P(\gamma')$ and substituting $w=P(z)$
in~\eqref{eq:mk-gen} we obtain
\begin{equation} \label{eq:mk-gen-cauchy}
  H(t) = \int_\gamma \frac{Q(P^{-1}(w))}{t-w} \d w 
\end{equation}
where $P^{-1}(w)$ denotes the branch of $P^{-1}$ lifting $\gamma$ to $\gamma'$.

\section{Cauchy-type integrals and linear differential
  operators}
\label{sec:cauchy-int}

Let $\cL$ be a scalar differential operator,
\begin{equation}
  \cL = c_r(z)\partial^r+\cdots+c_0(z), \qquad c_0,\ldots,c_r\in\C[z].
\end{equation}
Let $\gamma\subset\C$ be a smooth curve, and assume that $\gamma$ does
not pass through the singular points of $\cL$, except perhaps at its
endpoints. Finally let $g$ be a solution of $\cL g=0$ defined on
$\gamma$, and assume further that $g$ is bounded on $\gamma$
(including at the possibly singular endpoints). We denote by $p_+,p_-$
the endpoints of $\gamma$.

Then we define the Cauchy-type integral
\begin{equation}
  I(t) = \int_\gamma \frac{g(z)}{z-t} \d z
\end{equation}
It is classically known that $I(t)$ is a holomorphic functions defined
on $\C\setminus\gamma$, and moreover that the boundary values $I^+$
and $I^-$ of $I(t)$ on $\gamma$ from above and below respectively
satisfy $I^+-I^-=g\rest\gamma$. Moreover $I(t)$ can be analytically
continued along any path avoiding the endpoints of $\gamma$.

Kisunko \cite{kisunko} proved the following (under the extra mild
assumption that $g$ is holomorphic at the endpoints of $\gamma$).

\begin{Prop} \label{prop:kisunko} We have $\cL I(t)=R(t)$ where $R(t)$
  is a rational function having poles of order at most $r$ at
  $p_+,p_-$ and no other poles on $\C$.
\end{Prop}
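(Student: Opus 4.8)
The plan is to differentiate $I(t)$ under the integral sign, apply $\cL$ term by term, and then repeatedly integrate by parts to transfer derivatives off the kernel $1/(z-t)$ and onto $g$, exploiting the fact that $\cL g = 0$. The key observation is that differentiation of the Cauchy kernel with respect to $t$ is essentially the same as differentiation with respect to $z$: specifically $\partial_t^j \frac{1}{z-t} = j!\,(z-t)^{-(j+1)}$, while $\partial_z^j \frac{1}{z-t} = (-1)^j j!\,(z-t)^{-(j+1)}$, so the two differ only by a sign $(-1)^j$. This lets me rewrite $\cL I(t)$, where $\cL$ acts in the variable $t$ (or equivalently convert the $t$-derivatives hitting the kernel into $z$-derivatives).

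**First I would** make precise in which variable $\cL$ is being applied. The cleanest route is to let $\cL$ act in $t$ but with the convention that matches $\cL$ acting in $z$ on the kernel up to signs; after writing $\cL I(t) = \int_\gamma \big[\cL_z \tfrac{1}{z-t}\big] g(z)\,\d z$ (absorbing the alternating signs into modified coefficients, or equivalently working with the formal adjoint $\cL^*$), I integrate by parts $r$ times to move all the $z$-derivatives from the kernel onto $g$ and the coefficients $c_j(z)$. Each integration by parts produces a boundary term evaluated at $p_+$ and $p_-$, plus an interior integral. The interior integral reassembles, after collecting terms, into $\int_\gamma \frac{(\cL^* g)(z)}{z-t}\,\d z$ up to a sign — and here is where I would invoke $\cL g = 0$.

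**The subtlety** is that integration by parts against the kernel $\frac{1}{z-t}$ naturally produces the formal adjoint $\cL^*$ acting on $g$, not $\cL$ itself, so the hypothesis $\cL g = 0$ does not directly kill the interior term. The standard fix is to observe that after moving derivatives onto $g$, one reorganizes using the Leibniz rule so that the surviving interior integrand is a $\C$-linear combination of $\partial_t^i \tfrac{1}{z-t}$ paired against expressions of the form $\partial_z^{r-i}(c_r g) - \cdots$; choosing the bookkeeping carefully, the full interior contribution is exactly $\int_\gamma \frac{1}{z-t}\,(\cL g)(z)\,\d z = 0$. Concretely, the identity underlying this is that for each monomial $c_j \partial_z^j$ in $\cL$, repeated integration by parts of $\int c_j g\,\partial_z^j(z-t)^{-1}\,\d z$ yields boundary terms plus $(-1)^j\int (z-t)^{-1}\partial_z^j(c_j g)\,\d z$; summing over $j$ and comparing with the $t$-derivative structure shows the interior integrals collapse precisely because $g$ solves $\cL g = 0$ (after matching the alternating signs between $\partial_t$ and $\partial_z$).

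**What remains** is then purely an analysis of the boundary terms. Each of the $r$ integrations by parts, applied to the $j$-th order term, contributes expressions of the form $\big[\,c_j(z)\,\partial_z^{a}g(z)\cdot \partial_z^{b}(z-t)^{-1}\,\big]_{p_-}^{p_+}$ with $a+b < j \le r$, i.e. $b \le r-1$. Since $\partial_z^b (z-t)^{-1} = (-1)^b b!\,(z-t)^{-(b+1)}$ has a pole of order $b+1 \le r$ at $z = p_\pm$ viewed as a function of $t$, and since $g$ is holomorphic at the endpoints (the mild assumption), each boundary term is a rational function of $t$ with poles only at $t = p_+$ and $t = p_-$, of order at most $r$. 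Summing finitely many such terms gives $R(t)$, a rational function with poles of order at most $r$ at $p_+, p_-$ and no other poles, exactly as claimed. The \textbf{main obstacle} is the careful sign- and index-bookkeeping in the integration by parts to confirm that the interior integral vanishes and that the worst boundary pole has order exactly $r$ rather than $r+1$; this is routine but must be done attentively, and is where the holomorphy of $g$ at the endpoints is used to ensure the boundary values $\partial_z^a g(p_\pm)$ are finite.
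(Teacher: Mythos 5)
Your integration-by-parts argument is essentially Kisunko's original proof, and it establishes a strictly weaker statement than Proposition~\ref{prop:kisunko} as stated here. The standing hypotheses of this section assume only that $g$ is \emph{bounded} on $\gamma$, with the endpoints $p_\pm$ allowed to be singular points of $\cL$; the paper explicitly frames the Proposition as a generalization of \cite{kisunko}, whose ``mild assumption'' of holomorphy at the endpoints is exactly what you reintroduce at the end of your argument. Under boundedness alone your boundary terms need not exist: for $g(z)=\sqrt{z-p_+}$ (bounded, and a solution of a Fuchsian equation singular at $p_+$) the values $\partial_z^a g(p_+)$ are infinite, and even the interior integrals $\int_\gamma (z-t)^{-1}\partial_z^j g\,\d z$ can diverge at the endpoint. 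This generality is not decorative: in the application $g=Q(P^{-1}(z))$, the endpoint of $\gamma$ is $P(a)$, and the relevant branch of $P^{-1}$ is ramified there whenever $P'(a)=0$ --- a case included in the supremum defining the moment Bautin index. The paper's proof is built precisely to avoid differentiating $g$: it first shows $\cL I$ is \emph{univalued} on $\C\setminus\{p_+,p_-\}$ (the jump of $I$ across $\gamma$ is $g$, and $\cL g=0$ kills the jump of $\cL I$), then bounds each term $c_k(t)\int_\gamma g(z)(z-t)^{-k-1}\,\d z$ near $p_\pm$ by deforming the contour, using only $\sup_\gamma\abs{g}$, to get growth $O(\abs{t-p_\pm}^{-r})$, and concludes rationality from univaluedness plus moderate growth at $p_\pm$ and $\infty$.

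Separately, even granting holomorphy at the endpoints, your claimed vanishing of the interior term is incorrect as stated. Since $\cL$ acts in $t$, one has $\cL I(t)=\sum_j c_j(t)\int_\gamma g(z)\,\partial_t^j(z-t)^{-1}\,\d z$ with $\partial_t^j(z-t)^{-1}=(-1)^j\partial_z^j(z-t)^{-1}$; after $j$ integrations by parts the signs cancel and the interior contribution is $\int_\gamma (z-t)^{-1}\sum_j c_j(t)\,\partial_z^j g(z)\,\d z$. The coefficients are evaluated at $t$, not $z$, so this is neither $\int_\gamma(\cL g)(z)(z-t)^{-1}\d z$ nor its adjoint analogue, and it does \emph{not} vanish; your bookkeeping with $\partial_z^j(c_j g)$, which presumes $c_j(z)$ sits inside the integral, simply does not arise from $\cL_t I$. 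The standard repair is to write $c_j(t)=c_j(z)+\bigl(c_j(t)-c_j(z)\bigr)$ and use that $\bigl(c_j(t)-c_j(z)\bigr)/(t-z)$ is a polynomial in $(z,t)$: the $c_j(z)$ part dies by $\cL g=0$, while the remainder contributes a polynomial in $t$, which is harmless since $R$ is permitted a pole at $\infty$ (the Proposition excludes poles only on $\C$). This second issue is fixable; the endpoint issue is a genuine gap between what you prove and what the Proposition asserts, and it matters for the uniform bound over all $P,Q$ that the paper is after.
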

\begin{proof}[Sketch of proof.]
  By the classical properties of $I(t)$ mentioned above, $\cL I(t)$ is
  a (possibly multivalued) analytic function on
  $\C\setminus\{p_+,p_-\}$ with ramifications $p_+,p_-$, and the
  difference between the two branches near the branch cut at $\gamma$
  is $g$. But since $\cL g=0$, the boundary values of $\cL I^+$ and
  $\cL I^-$ agree, so $\cL I$ is in fact a univalued holomorphic
  function defined on $\C\setminus\{p_+,p_-\}$. We will show that it
  has poles of order at most $r$ at $p_+,p_-$ and at most a pole at
  $\infty$.

  Since $g\rest\gamma$ is bounded, we may derive under the integral
  sign and write
  \begin{equation} \label{eq:LI-estimate} \cL I(t) = \sum_{k=0}^r
    \frac{(-1)^k}{k!} c_k(t) \int_\gamma \frac{g(z)}{(z-t)^{k+1}}\d z
  \end{equation}
  We now show that $\cL I(t)$ admits polynomial growth of order at
  most $r$ at $p_+$ (and the same arguments work for $p_-$). It is
  enough to consider each of the integrals in~\eqref{eq:LI-estimate}
  separately. Moreover, we may assume that $\gamma$ is a small piece
  of a smooth curve near $p_+$ (because the integral over the rest of
  $\gamma$ is analytic at $p_+$). Choose a coordinate system where
  $p_+=0$.

  Let $M$ denote an upper bound for $\abs{g(z)}$ on $\gamma$. Let $t$
  be a point in a punctured neighborhood of $p_+$. Since $g(z)$ admits
  analytic continuation along any curve in the punctured neighborhood,
  may deform $\gamma$ without changing $\cL I(t)$ so that for some
  positive constants $C,D$ independent of $t$,
  \begin{enumerate}
  \item For every $z\in\gamma$, we have $|z-t|\ge C|t|$ and also
    $|z-t|>C|z|$.
  \item On $\gamma$ we have $|\d z|\le D \d|z|$.
  \item Write $g=\gamma_1+\gamma_2$ where $\gamma_1$ is the part of
    $\gamma$ which lies in $\{z:|z|<2t\}$ and $\gamma_2$ is the rest.
    Then the length of $\gamma_1$ is at most $D|t|$, and the length of
    $\gamma_2$ is at most $D$.
  \end{enumerate}
  We now estimate
  \begin{multline}
    \big|\int_\gamma \frac{g(z)}{(z-t)^{k+1}}\d z\big|
    \le \int_{\gamma_1} M (C |t|)^{-k-1}\abs{\d z} + \int_{\gamma_2} \frac{MD}{(C|z|)^{k+1}} \d|z| \\
    \le \length(\gamma_1) M (C |t|)^{-k-1}+ \left[
      -\frac{MDC^{k+1}}{k} |z|^{-k} \right]^{\cdots}_{2|t|} \le
    O(|t|^{-k})
  \end{multline}
  proving the claim.
  
  Finally, it is easy to see that $I(t)$ and its derivatives have a
  zero at $t=\infty$, and since the coefficients of $\cL$ are
  polynomial it follows that $I(t)$ has at most a pole at $\infty$ as
  well.
\end{proof}

\section{A differential operator for $Q(P^{-1})$}
\label{sec:diff-op}

Let $V$ denote the linear space spanned by the $\degP$ branches of the
algebraic function $g(z):=Q(P^{-1}(z))$. We denote $r:=\dim V$, and
note that $r$ may be strictly smaller than $\degP$. Denote by
$p_1,\ldots,p_s$ the critical values of $P$.

\subsection{The operator $\cL$}
By a theorem of Riemann \cite[Theorem~19.7]{IY:book}, there exists a
linear $r$-th order differential operator $\cL$, with polynomial
coefficients,
\begin{equation} \label{eq:cl-def} \cL =
  c_r(z)\partial^r+\cdots+c_0(z), \qquad c_0,\ldots,c_r\in\C[z]
\end{equation}
whose solution space coincides with $V$. Moreover, $\cL$ is uniquely
determined by the requirement that $c_r,\ldots,c_0$ do not share a
non-trivial common factor. We recall the construction of $\cL$.

Recall that the Wronskian $W(f_1,\ldots,f_n)$ of a tuple of functions
is defined to be
\begin{equation}
  W(f_1,\ldots,f_n) := \det
  \begin{pmatrix}
    f_1 & \cdots & f_n \\
    \partial f_1 & \cdots & \partial f_n \\
    & \vdots & \\
    \partial^{n-1} f_1 & \cdots & \partial^{n-1} f_n
  \end{pmatrix}
\end{equation}
Now let $g_1,\ldots,g_r$ denote $r$ branches of $g(z)$ which span $V$.
Then clearly for any $f\in V$ we have $W(g_1,\ldots,g_r,f)=0$. We
define the operator $\~L$ given by
\begin{equation} \label{eq:Ltilde} \~\cL(f) =
  \frac{W(g_1,\ldots,g_r,f)}{W_r}= \big[\partial^r+\sum_{k=0}^{r-1}
  \~c_k(z) \partial^k\big] f \qquad \text{where }
  \~c_k=\frac{W_k(g_1,\ldots,g_r)}{W_r(g_1,\ldots,g_r)}
\end{equation}
where $W_i$ are the minors obtained when expanding the Wronskian
$W(g_1,\ldots,g_r,f)$ along the last column. If the monodromy of $g$
along a closed curve $\gamma$ induces the linear automorphism
$M_\gamma:V\to V$ then the corresponding monodromy along $\gamma$ of
each $W_k$ is given by multiplication by $\det M_\gamma$. In
particular, the coefficients $\~c_k$ are univalued functions.

\subsection{The divisors $[W_k]$}

Let $k=0,\ldots,r$ and $z_0\in\CP$. Choose any local representative of
the functions $g_1,\ldots,g_r$. Since these functions have at most a
finite ramification and moderate growth at $z_0$, we may expand
\begin{equation}
  W_k = \sum_{j=-N}^\infty a_{k,j}(z-z_0)^{j/q}
\end{equation}
where $q$ and $N$ are some natural numbers. Suppose that $a_{k,j_0}$
is the first non-zero coefficient among the $a_{k,j}$. Then we say
that the \emph{fractional order of $W_k$ at $z_0$} is
$\ord_{z_0} W_k:=j_0/q$. This notion is well-defined: indeed, the
monodromy of $W_k$ along any curve is given by multiplication by a
non-zero constant and hence does not change the order. We define the
fractional divisor $[W_k]$ of $W$ to be
\begin{equation}
  [W_k]:=\sum_{z\in\CP} \ord_z W_k(z) [z].
\end{equation}
This sum is locally-finite, and hence finite. Moreover it is clear
that $[\~c_k]=[W_k]-[W_r]$. In particular, $\~c_k$ admits finitely
many singularities of finite order. Since we have already seen that
$\~c_k$ is univalued, it is in fact a rational function.

We can also write the divisor $[W_k]$ in terms of residues. Indeed,
since the monodromy of $W_k$ along any curve is given by
multiplication by a constant, the one-form $\dL W_k$ is a univalued
one-form. It is easy to verify in local coordinates that it in fact
has only finitely many poles, all of first order, and

\begin{equation}
  [W_k] = \sum_{z\in\CP} \res_z (\dL W_k) [z].
\end{equation}
For any divisor $D=\sum n_i [z_i]$ we denote $D_{z_i}=n_i$ and
\begin{equation}
  \deg D = \sum n_i, \qquad D^+ = \sum_{n_i\ge0} n_i [z_i], \quad D^- = -\sum_{n_i\le0} n_i [z_i].
\end{equation}
In particular, it follows from the above that $\deg [W_k]=0$.

\subsection{An estimate for $\deg[W_r]^+$}

Our next goal is to estimate $\deg[W_r]^+$. Since $[W_r]$ is
principal, it will suffice to estimate $\deg[W_r]^-$. Recall that
$W_r=W(g_1,\ldots,g_r)$ where $g_k=Q(P^{-1}_k(z))$ and
$P^{-1}_1(z),\ldots,P^{-1}_r(z)$ denote $r$ different branches of
$P^{-1}(z)$. If $z\in\C$ is not a critical value of $P$ then these
functions are all holomorphic around $z$, and hence $[W_r]_z$ is
non-negative.

Let the critical value $p_i$ have exactly $m_i<\degP$ preimages, and
write $b_i:=\degP-m_i$ for the number of critical points (counted with
multiplicities) over $p_i$. Then at most $2b_i$ of the branches $g_k$
may be ramified at $p_i$. We expand the determinant defining $W_r$ and
note that:
\begin{itemize}
\item since $g_k$ is bounded, its order is non-negative;
\item differentiation can decrease the order by at most $1$;
\item differentiation cannot decrease the order below zero for
  holomorphic $g_k$.
\end{itemize}
We thus conclude that
\begin{equation*}
  \ord_{p_i} W_r > (-r+1)+\cdots+(-r+\nu), \qquad \text{where }\nu=\min(r,2b_i).
\end{equation*}
Since $b_1+\cdots+b_s=\degP-1$, it is not hard to see that the maximal
value for the following sum is obtained when $b_i=1$ for
$i=1,\ldots,s$, and in any case
\begin{equation}
  \sum_{i=1}^s \ord_{p_i} W_r > - (2\degP-3)(\degP-1).
\end{equation}

It remains to estimate the order of $W_r$ at $\infty$. Choose a
coordinate $w$ around $\infty$ such that $P(w)=w^{-\degP}$. Then any
branch of $Q(P^{-1}(w))$ has the Puiseux expansion
\begin{equation}
  Q(P^{-1}(w)) = Q(w^{-1/\degP}) = \alpha w^{-\degQ/\degP} + \cdots, \qquad \alpha\neq0
\end{equation}
where $\cdots$ denote higher order terms. Moreover, the derivative
$\partial_z=-w^2\partial_w$ increases the order of zero at $w=0$ by at
least one. Expanding the determinant defining $W_r$ we see that
\begin{equation}
  \ord_\infty W_r \ge -\frac{r\degQ}{\degP} + \frac{r(r-1)}{2}.
\end{equation}
In conclusion, we have
\begin{equation} \label{eq:w0-deg} \deg [W_r]^+ = \deg [W_r]^- \le
  \frac{\degQ r}{\degP} + (2\degP-3)(\degP-1)-\frac{r(r-1)}{2}.
\end{equation}

\subsection{An estimate for $\deg c_r$}

We wish to derive an estimate for the number of singularities of
$\cL$, or more specifically for $\deg c_r$. By definition, $c_r$ is a
polynomial and $[c_r]^+$ is the least common upper bound for
$[\~c_0]^-,\ldots,[\~c_{r-1}]^-$ in~\eqref{eq:Ltilde}. Recall that
$[\~c_k]=[W_k]-[W_r]$.

We first note that $\cL$ is a Fuchsian operator. Indeed, since the
solutions of $\cL$, being algebraic functions, have moderate growth at
each singularity, this follows from a theorem of Fuchs
\cite[Theorem~19.20]{IY:book}. Thus by definition the order of $[c_r]$
at any point $p\in\C$ cannot exceed $r$. We will apply this to the
points $p_1,\ldots,p_s$.

Let now $z\in\C$ and $z\not\in\{p_1,\ldots,p_s\}$. Then the branches
$g_1,\ldots,g_r$ are holomorphic at $z$, so $z$ is not a point of
$[W_0]^-,\ldots,[W_{r-1}]^-$. In other words, $z$ can only be a point
of $[\~c_0]^-,\ldots,[\~c_{r-1}]^-$ if it comes from $[W_r]^+$. Thus
we see that
\begin{equation}
  [c_r]^+ \le \sum_{i=1}^s r [p_i] + [W_r]^+
\end{equation}
Using~\eqref{eq:w0-deg} and noting that $s\le \degP-1$ and $r\le \degP$,
we have the following Proposition.

\begin{Prop} \label{prop:cr-deg} The following estimate holds,
  \begin{equation}
    \deg c_r = \deg [c_r]^+ \le \frac{\degQ r}{\degP} + 3(\degP-1)^2 - \frac{r(r-1)}{2}.
  \end{equation}
\end{Prop}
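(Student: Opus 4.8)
The plan is to assemble the estimate directly from the divisor-theoretic identities already established in the preceding subsections. The target quantity is $\deg c_r = \deg[c_r]^+$, where the equality on the left holds because $c_r$ is a polynomial (so all its zeros lie in $\C$ and contribute positively to $[c_r]^+$, while $[c_r]^-$ is supported only at $\infty$). The governing inclusion is the bound
\begin{equation*}
  [c_r]^+ \le \sum_{i=1}^s r[p_i] + [W_r]^+,
\end{equation*}
derived at the end of the previous subsection from $[\~c_k]=[W_k]-[W_r]$ together with the Fuchsian property (which caps the order of $[c_r]$ at each finite singularity by $r$) and the observation that, away from the critical values $p_1,\dots,p_s$, the branches $g_1,\dots,g_r$ are holomorphic so the only possible negative contributions to the $[\~c_k]$ come from $[W_r]^+$.

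First I would take degrees on both sides of that divisor inequality. Since $\deg$ is monotone on effective-dominated divisors, this yields
\begin{equation*}
  \deg[c_r]^+ \le rs + \deg[W_r]^+.
\end{equation*}
Next I would substitute the two crude bounds recorded in the text: the number of critical values satisfies $s\le\degP-1$ (there are at most $\degP-1$ critical points of $P$, counted with multiplicity, hence at most that many distinct critical values), giving $rs\le r(\degP-1)$; and the estimate~\eqref{eq:w0-deg} supplies
\begin{equation*}
  \deg[W_r]^+ \le \frac{\degQ\, r}{\degP} + (2\degP-3)(\degP-1) - \frac{r(r-1)}{2}.
\end{equation*}

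Combining these gives
\begin{equation*}
  \deg c_r \le \frac{\degQ\, r}{\degP} - \frac{r(r-1)}{2} + r(\degP-1) + (2\degP-3)(\degP-1).
\end{equation*}
The only remaining task is to check that $r(\degP-1)+(2\degP-3)(\degP-1)\le 3(\degP-1)^2$, so that these two terms can be absorbed into the claimed $3(\degP-1)^2$. Since $r\le\degP$ (the dimension of $V$ is at most the number of branches of $P^{-1}$), we have $r(\degP-1)\le\degP(\degP-1)$, and then $\degP(\degP-1)+(2\degP-3)(\degP-1)=(3\degP-3)(\degP-1)=3(\degP-1)^2$, giving exactly the desired inequality. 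This is the one step requiring a small explicit computation, but it is entirely routine; there is no genuine obstacle here, as the Proposition is purely a bookkeeping consequence of the structural results \eqref{eq:w0-deg} and the divisor inclusion, with the bounds $s\le\degP-1$ and $r\le\degP$ doing the arithmetic work.
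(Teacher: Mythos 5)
Your proof is correct and is essentially identical to the paper's: the paper derives the same divisor inequality $[c_r]^+ \le \sum_{i=1}^s r[p_i] + [W_r]^+$, takes degrees, and invokes \eqref{eq:w0-deg} together with $s\le \degP-1$ and $r\le \degP$, leaving exactly the arithmetic $(\degP+2\degP-3)(\degP-1)=3(\degP-1)^2$ that you spell out. You have merely made explicit two steps the paper leaves implicit (the identity $\deg c_r=\deg[c_r]^+$ and the final absorption computation), both correctly.
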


\section{Estimate for the order of $H(t)$ at infinity}
\label{sec:order-zero}

Recall from~\secref{sec:moment-gen} that $H(t)$ denotes the moment
generating function~\eqref{eq:mk-gen}, which can be represented
(around $t=\infty$) as a Cauchy-type integral~\eqref{eq:mk-gen-cauchy}
of the algebraic function $g(z)=Q(P^{-1}(z))$. Recall
from~\secref{sec:diff-op} that $V$ denotes the linear span of the
branches of $g(z)$ with $r:=\dim V$ and $\cL$ the differential
operator~\eqref{eq:cl-def} satisfying $V=\ker\cL$.

\begin{Prop} \label{prop:clH-nonzero} If $H(t)\not\equiv0$ then
  $\cL H(t)\not\equiv0$.
\end{Prop}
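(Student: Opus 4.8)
The plan is to prove the contrapositive: assuming $\cL H\equiv0$, I will show $H\equiv0$. By Proposition~\ref{prop:kisunko} we already know that $\cL H=R$ for a rational $R$, so the content of the present statement is that the only way to have $R\equiv0$ is the trivial one. If $\cL H\equiv0$ then $H$ lies in $V=\ker\cL$, hence $H$ is a branch of an algebraic function whose ramification is governed by the monodromy of the branches of $g(z)=Q(P^{-1}(z))$. In particular $H$ can be singular only at the critical values $p_1,\dots,p_s$ of $P$, at the finitely many apparent singularities of $\cL$, and at $\infty$, and at each of these points it carries the local monodromy of $\cL$. I may assume $Q\not\equiv0$ (otherwise $H\equiv0$ and there is nothing to prove), so that $g\not\equiv0$ and its leading Puiseux coefficient at every point is nonzero.

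The key local input comes from the complementary description of $H$ as the Cauchy-type integral~\eqref{eq:mk-gen-cauchy}. The Sokhotski--Plemelj jump $H^+-H^-=g\rest\gamma$ means precisely that the analytic continuation of $H$ once around an endpoint $p_+$ of $\gamma$ (say $p_+=P(a)$) differs from $H$ by the density: the monodromy of $H$ at $p_+$ is, up to a nonzero constant, the affine map $H\mapsto H-g$, where $g$ is the branch of $Q(P^{-1})$ carried by $\gamma$. Equivalently, near $p_+$ the integral acquires a logarithmic term whose coefficient is the leading part of the density $g$ at $p_+$. I would make this precise through the standard local expansion of a Cauchy integral at the endpoint of its contour.

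Now I would confront the two descriptions. On one hand, if $\cL H\equiv0$ then the monodromy of $H$ around $p_+$ is the \emph{linear} operator $M_{p_+}\colon V\to V$, which is induced by the monodromy of the covering $P^{-1}$ and is therefore a permutation of the sheets over $p_+$, of finite order; in particular $M_{p_+}$ is semisimple. A semisimple local monodromy produces no logarithmic terms and yields a splitting $V=\ker(M_{p_+}-I)\oplus\operatorname{im}(M_{p_+}-I)$. On the other hand, the Cauchy integral forces $(M_{p_+}-I)H=-g$, so $g\in\operatorname{im}(M_{p_+}-I)$. If $a$ is not a critical point of $P$, then the sheet through $a$ is fixed by $M_{p_+}$, so $g\in\ker(M_{p_+}-I)$; intersecting the two conditions through the splitting above gives $g=0$, contradicting $Q\not\equiv0$. (From the logarithmic viewpoint the same conclusion is immediate: a genuine logarithm is incompatible with semisimple monodromy.)

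The main obstacle is the remaining case in which the relevant endpoint is ramified, i.e. $a$ (respectively $b$) is itself a critical point of $P$. Then the density branch is one of the sheets permuted by $M_{p_+}$, the Cauchy integral contributes a pure fractional power rather than a logarithm, and the clean contradiction above disappears. I would handle this by playing the two endpoints off against one another: the obstruction at $p_+$ is the vanishing of the projection of $g$ onto $\ker(M_{p_+}-I)$, that is, of the average of the density over its monodromy cycle, and I would argue that this average cannot vanish simultaneously at $P(a)$ and $P(b)$ (while also accommodating the Abel configuration $P(a)=P(b)$, where the two branches through $a$ and $b$ sit over a single base point) without forcing $g\equiv0$. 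Making this last step rigorous, together with the precise local expansion of the Cauchy integral at a ramified endpoint, is where the real work lies.
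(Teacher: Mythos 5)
Your argument in the clean configuration ($a$, $b$ noncritical points of $P$ and $P(a)\neq P(b)$) is essentially sound, and it is genuinely different from the paper's: you localize at the endpoints of $\gamma$ and play the affine (Plemelj-type) monodromy of the Cauchy integral against the linear, semisimple (finite-order, permutation-induced) monodromy of $\ker\cL$, concluding that the density germ lies in $\ker(M_{p_+}-I)\cap\operatorname{im}(M_{p_+}-I)=0$. But the cases you defer are not peripheral — they are the heart of the proposition. The Bautin-index supremum ranges over \emph{all} $P,Q$ of bounded degree, so the statement must cover ramified endpoints and, above all, the glued configuration $P(a)=P(b)$, which is precisely the setting of the Abel-equation application motivating the paper (there one normalizes $P(a)=P(b)=0$). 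In that configuration $\gamma$ is a closed curve, the two jump contributions at the common base point can cancel identically, and your endpoint monodromy relation degenerates; you acknowledge this but leave it as "where the real work lies", so the proposal is incomplete exactly where the difficulty sits.

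Worse, the strategy you sketch for closing the gap rests on a claim that is false. You propose to show that the local obstructions — the averages of the density over its monodromy cycles at $P(a)$ and at $P(b)$ — cannot vanish simultaneously "without forcing $g\equiv0$". Any composition-condition pair refutes this: take $P(z)=z^2$, $Q(z)=z^2$, $a=-1$, $b=1$. Then every moment $m_k=\int_{-1}^1 z^{2k}\cdot z^2\cdot 2z\,\d z$ vanishes, so $H\equiv0$, hence $\cL H\equiv0$ and all local obstructions vanish — yet $g(w)=Q(P^{-1}(w))=w\not\equiv0$. (Note the glued endpoint $P(\pm1)=1$ is not even a critical value here, so gluing alone, not ramification, already defeats the local approach.) In the glued and ramified cases the correct conclusion is $H\equiv0$, not $g\equiv0$, and purely local endpoint data cannot deliver it. This is exactly why the paper argues globally at $t=\infty$ instead: if $\cL H\equiv0$ then $H\in V$ is holomorphic at $\infty$, hence fixed by $M_\infty$ and therefore by the averaging operator $T_\infty=\tfrac{1}{\degP}\sum_{k=0}^{\degP-1}M_\infty^k$, whose image — by cyclicity of the monodromy of $P^{-1}$ at $\infty$ — is spanned by the trace polynomial $S(t)=\sum_{w:P(w)=t}Q(w)$; a polynomial vanishing at $\infty$ is identically zero. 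That single argument requires no case analysis on $a,b$ whatsoever. To salvage your route you would need a comparable global ingredient; as written, the proposal has a genuine gap.
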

\begin{proof}
  Assume that $\cL H(t)\equiv0$. Then $H(t)\in V$. Moreover, $H(t)$ is
  holomorphic at $t=\infty$, and in particular it is invariant under
  the monodromy around infinity $M_\infty$ and hence also under the
  operator
  \begin{equation}
    T_\infty: V\to V, \qquad T_\infty := \frac{1}{\degP} \sum_{k=0}^{\degP-1} M_\infty^k.
  \end{equation}
  Recall that $g(z)=Q(P^{-1}(z))$ and $P^{-1}(z)$ has cyclic monodromy
  at $\infty$. It follows that the image of $T_\infty$ is
  one-dimensional and spanned by
  \begin{equation}
    \Im T_\infty = \C\{ S \}, \qquad S(t):= \sum_{w:P(w)=t} Q(w).
  \end{equation}
  Moreover, $S(t)$ is a polynomial: for instance, it is has no poles
  on $\C$ and moderate growth at $\infty$. We conclude that $H(t)$ is
  a polynomial. Finally, $H(t)$ has a zero at $t=\infty$ by
  definition, and since it is also a polynomial it follows that
  $H(t)\equiv0$, contradicting the hypothesis.
\end{proof}

Let $D=t\partial_t$ denote the Euler operator, and recall that it also
gives the Euler operator at $t=\infty$ (up to a sign). The following
proposition describes the behavior of $\cL$ around infinity.

\begin{Prop} \label{prop:cl-fuchs-infty} We may write
  \begin{equation} \label{eq:cl-fuchs-infty} \cL(t) = u(t) \^\cL
    \qquad \^\cL := D^r+\^c_{r-1}D^{r-1}+\cdots+\^c_0,
  \end{equation}
  where $\^c_{r-1},\ldots,\^c_0$ are rational functions, holomorphic
  at $t=\infty$, and $u(t)$ is a rational function satisfying
  \begin{equation}
    \ord_\infty u \ge -\biggl[ \frac{\degQ r}{\degP} + 3(\degP-1)^2 - \frac{r(r+1)}{2} \biggr].
  \end{equation}
\end{Prop}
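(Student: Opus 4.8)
The plan is to conjugate $\cL$ into its Euler form at $t=\infty$ and to read off $u$ as the resulting leading coefficient, leaving the holomorphy of the remaining coefficients as the only substantive point. Concretely, I would use the classical identity $t^k\partial^k=\prod_{i=0}^{k-1}(D-i)$, valid for $k=0,\ldots,r$ and proved by an immediate induction from $D=t\partial$, to substitute $\partial^k=t^{-k}\prod_{i=0}^{k-1}(D-i)$ into $\cL=\sum_{k=0}^r c_k(t)\partial^k$. Since each factor $\prod_{i=0}^{k-1}(D-i)$ is monic of degree $k$ in $D$, collecting by powers of $D$ gives $\cL=\sum_{j=0}^r b_j(t)D^j$ with $b_j(t)=\sum_{k\ge j}s(k,j)\,c_k(t)\,t^{-k}$, where $s(k,j)$ is the coefficient of $D^j$ in the falling factorial (a Stirling number of the first kind). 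Only the $k=r$ term contributes to $D^r$, so $b_r(t)=c_r(t)\,t^{-r}$; I would then set $u:=b_r$ and $\^c_j:=b_j/u$, yielding $\cL=u\,\^\cL$ with $\^\cL$ monic in $D^r$ as required.

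The bound on $\ord_\infty u$ is then routine. As $c_r$ is a polynomial of degree $\deg c_r$, one has $\ord_\infty u=\ord_\infty c_r+\ord_\infty t^{-r}=r-\deg c_r$. Feeding in the estimate of Proposition~\ref{prop:cr-deg} and using $r+\tfrac{r(r-1)}{2}=\tfrac{r(r+1)}{2}$ gives exactly $\ord_\infty u\ge-[\tfrac{\degQ r}{\degP}+3(\degP-1)^2-\tfrac{r(r+1)}{2}]$.

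The main obstacle is the holomorphy of the $\^c_j$ at infinity. From the formula above, $\^c_j=\sum_{k\ge j}s(k,j)\,c_k(t)\,t^{r-k}/c_r(t)$, so it suffices to show $\ord_\infty(c_k\,t^{r-k}/c_r)\ge0$ for each $k\le r$, i.e. the degree inequalities $\deg c_k-k\le\deg c_r-r$. These are precisely the statement that $t=\infty$ is a regular singular point of $\cL$, and I would establish them by the same mechanism used for the finite points in \secref{sec:diff-op}: the solutions of $\cL$ are branches of the algebraic function $Q(P^{-1})$ and hence have moderate growth at infinity, so the theorem of Fuchs applies at $\infty$ as well. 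The care needed here is exactly that the earlier discussion drew its consequence only for the finite singularities $p_1,\ldots,p_s$; the content of this last step is to observe that the moderate-growth argument covers the point at infinity and so forces the degree inequalities, which is what makes the $\^c_j$ holomorphic at $t=\infty$.
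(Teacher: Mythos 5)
Your proof is correct and takes essentially the same route as the paper: the paper simply cites \cite[Proposition~19.18]{IY:book} for the existence of the Euler form at a Fuchsian singularity (Fuchsianity at $t=\infty$ having been established, exactly as you argue, from the moderate growth of the algebraic solutions via Fuchs' theorem), while you unpack that citation explicitly through the identity $t^k\partial^k=D(D-1)\cdots(D-k+1)$. Your identification $u=c_r t^{-r}$, hence $\ord_\infty u=r-\deg c_r$, and the application of Proposition~\ref{prop:cr-deg} coincide with the paper's computation.
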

\begin{proof}
  The existence of an expression~\eqref{eq:cl-fuchs-infty} is a direct
  consequence of the fact that $\cL$ is a Fuchsian operator at
  $t=\infty$ (see \cite[Proposition~19.18]{IY:book}). Using
  Proposition~\ref{prop:cr-deg} we have
  \begin{equation}
    \ord_\infty u = r-\deg c_r \ge -\biggl[ \frac{\degQ r}{\degP} + 3(\degP-1)^2 - \frac{r(r+1)}{2} \biggr],
  \end{equation}
  as claimed.
\end{proof}

Finally we have the following estimate.

\begin{Lem}\label{lem:h-ord-bound}
  If $H(t)\not\equiv0$ then
  \begin{equation}
    \ord_\infty H(t) \le \frac{\degQ r}{\degP} + 3(\degP-1)^2 - \frac{r(r-3)}{2}.
  \end{equation}
\end{Lem}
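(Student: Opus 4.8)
The plan is to apply the operator $\cL$ to $H(t)$, control the order at infinity of the (rational) result via Kisunko's theorem, and then transfer this control back to $H$ using the Euler normal form of $\cL$ at infinity. The whole argument is the chain $\ord_\infty H \le \ord_\infty(\^\cL H) = \ord_\infty R - \ord_\infty u$, estimated term by term.

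First I would invoke Kisunko's result. Around $t=\infty$ the function $H$ is the Cauchy-type integral~\eqref{eq:mk-gen-cauchy} of $g(z)=Q(P^{-1}(z))$, and since each branch of $g$ lies in $V=\ker\cL$ we have $\cL g=0$; hence Proposition~\ref{prop:kisunko} gives $\cL H=R$, a rational function whose only finite poles are at the endpoints $P(a),P(b)$ of $\gamma$, each of order at most $r$. By Proposition~\ref{prop:clH-nonzero}, $\cL H\not\equiv0$, so $R\not\equiv0$. Because the principal divisor of a rational function has degree zero, $\ord_\infty R$ equals the total order of the finite poles of $R$ minus that of its finite zeros; as the finite poles contribute at most $2r$, I obtain $\ord_\infty R\le 2r$. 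Passing to the factorization $\cL=u(t)\,\^\cL$ of Proposition~\ref{prop:cl-fuchs-infty}, I then have $\^\cL H=R/u$, so that, using the stated lower bound on $\ord_\infty u$,
\[
\ord_\infty(\^\cL H)=\ord_\infty R-\ord_\infty u\le 2r+\Bigl[\tfrac{\degQ r}{\degP}+3(\degP-1)^2-\tfrac{r(r+1)}{2}\Bigr].
\]

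The heart of the argument, and the step I expect to be the main (if not deep) obstacle, is the inequality $\ord_\infty H\le\ord_\infty(\^\cL H)$, since this is where the Euler structure of $\^\cL$ is essential. Write $N=\ord_\infty H$ and expand $H=\sum_{k\ge N}c_k t^{-k}$ with $c_N\ne0$, a genuine single-valued power series in $t^{-1}$ because $H$ is holomorphic at infinity. Since $D(t^{-k})=-k\,t^{-k}$, each monomial $t^{-k}$ is an eigenvector of every power of $D$, and since the coefficients $\^c_0,\ldots,\^c_{r-1}$ are holomorphic at $t=\infty$ (hence expand in nonnegative powers of $t^{-1}$), the expansion of $\^\cL H$ in powers of $t^{-1}$ cannot start before order $N$; its coefficient of $t^{-N}$ is exactly $c_N\,\chi(-N)$, where $\chi(\lambda)=\lambda^r+\^c_{r-1}(\infty)\lambda^{r-1}+\cdots+\^c_0(\infty)$ is the indicial polynomial of $\^\cL$ at infinity. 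Therefore $\ord_\infty(\^\cL H)\ge N$ in all cases (with strict inequality exactly when $-N$ is an exponent, i.e. $\chi(-N)=0$), which is all that is required and makes a case split unnecessary.

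Combining the three steps and simplifying $2r-\tfrac{r(r+1)}{2}=-\tfrac{r(r-3)}{2}$ yields
\[
\ord_\infty H\le\ord_\infty(\^\cL H)\le\frac{\degQ r}{\degP}+3(\degP-1)^2-\frac{r(r-3)}{2},
\]
as claimed. The only points that need genuine care are the holomorphy of $g$ at the endpoints $P(a),P(b)$ required to apply Proposition~\ref{prop:kisunko} (a mild genericity assumption on $a,b$ relative to the critical points of $P$), and the possibility $P(a)=P(b)$, which only improves the pole bound and so leaves $\ord_\infty R\le 2r$ intact.
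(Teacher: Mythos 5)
Your proposal is correct and follows the paper's proof essentially verbatim: $\cL H=R$ via Proposition~\ref{prop:kisunko}, non-vanishing via Proposition~\ref{prop:clH-nonzero}, the pole count giving $\ord_\infty R\le 2r$, the factorization $\cL=u\,\^\cL$ of Proposition~\ref{prop:cl-fuchs-infty}, and the observation that $\^\cL$ cannot decrease the order of a zero at $\infty$ (which you usefully spell out via the indicial polynomial, where the paper states it in one line). Your one flagged worry is unnecessary: the paper's version of Proposition~\ref{prop:kisunko} is stated for $g$ merely \emph{bounded} on $\gamma$ including the endpoints, which holds automatically for the algebraic function $Q(P^{-1})$ at finite points, so no genericity assumption on $a,b$ is needed.
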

\begin{proof}
  Using Proposition~\ref{prop:kisunko} we have $\cL H(t)=R(t)$, where
  $R(t)$ has at most two poles of order $r$ in $\C$. Moreover, by
  Proposition~\ref{prop:clH-nonzero} $R(t)$ is non-zero. It follows
  that $\ord_\infty R(t)\le 2r$. Using
  Proposition~\ref{prop:cl-fuchs-infty} we have
  \begin{align*}
    \ord_\infty(\^\cL H(t)) &= \ord_\infty R(t) - \ord_\infty u(t) \\
    &\le 2r+\frac{\degQ r}{\degP} + 3(\degP-1)^2 - \frac{r(r+1)}{2} \\
    &\le \frac{\degQ r}{\degP} + 3(\degP-1)^2 - \frac{r(r-3)}{2}.
  \end{align*}
  It remains only to note that the application of $\^\cL$ cannot
  decrease the order of zero, and the claim follows.
\end{proof}

Finally we complete the proof of our main result.

\begin{proof}[Proof of Theorem \ref{thm:main-thm}]
  If $H(t)\not\equiv0$ then by Lemma~\ref{lem:h-ord-bound}
  \begin{equation}
    \ord_\infty H(t) \le \degQ+3(\degP-1)^2,
  \end{equation}
  and the claim for $N(\degP,\degQ,a,b)$ follows by
  Proposition~\ref{prop:reduction-to-order-of-I}. The claim for
  $\~N(\degP,\degq,a,b)$ then follows from Lemma~\ref{lem:mk-vs-mmk},
  noting that $\degQ=\degq+1$.
\end{proof}

\bibliographystyle{plain} \bibliography{nrefs}

\begin{thebibliography}{10}

\bibitem{binyamini_number_2010}
G.~Binyamini, D.~Novikov, and S.~Yakovenko.
\newblock {On the number of zeros of Abelian integrals}.
\newblock {\em Inventiones Mathematicae}, 181(2):227--289, 2010.

\bibitem{binyamini_polynomial_2009}
Gal Binyamini and Sergei Yakovenko.
\newblock {Polynomial Bounds for Oscillation of Solutions of Fuchsian Systems}.
\newblock In {\em Annales de l'Institut Fourier}, volume~59, pages 2891--2926.
  Association des annales de l'institut Fourier, 2009.

\bibitem{blinov_local_2005}
M.~Blinov, M.~Briskin, and Y.~Yomdin.
\newblock {Local center conditions for the Abel equation and cyclicity of its
  zero solution}.
\newblock {\em Contemporary Mathematics}, 382:65, 2005.

\bibitem{briskin_center_2010}
M.~Briskin, N.~Roytvarf, and Y.~Yomdin.
\newblock {Center conditions at infinity for Abel differential equations}.
\newblock {\em Annals of Mathematics}, 172(1):437--483, 2010.

\bibitem{briskin_tangential_2005}
M.~Briskin and Y.~Yomdin.
\newblock {Tangential version of Hilbert 16th problem for the Abel equation}.
\newblock {\em Moscow Mathematical Journal}, 5(1):23--53, 2005.

\bibitem{IY:book}
Yulij Ilyashenko and Sergei Yakovenko.
\newblock {\em Lectures on analytic differential equations}, volume~86 of {\em
  Graduate Studies in Mathematics}.
\newblock American Mathematical Society, Providence, RI, 2008.

\bibitem{kisunko}
V.~A. Kisunko.
\newblock Cauchy type integrals and a {$D$}-moment problem.
\newblock {\em C. R. Math. Acad. Sci. Soc. R. Can.}, 29(4):115--122, 2007.

\bibitem{neto_number_1980}
Alcides~Lins Neto.
\newblock {On the number of solutions of the equation $\frac{{dx}}{{dt}} =
  \sum\limits_{j = 0}^n {a_j (t) x^j ,0 \leqq t \leqq 1,}$ for which
  $x(0)=x(1)$}.
\newblock {\em Inventiones mathematicae}, 59(1):67--76, February 1980.

\bibitem{pakovich_solution_2007}
F.~Pakovich and Mikhail Muzychuk.
\newblock Solution of the polynomial moment problem.
\newblock {\em Proceedings of the London Mathematical Society}, 99(3):633--657,
  2009.

\bibitem{PRY}
F.~Pakovich, N.~Roytvarf, and Y.~Yomdin.
\newblock {Cauchy-type integrals of algebraic functions}.
\newblock {\em Israel J. Math.}, 144:221--291, 2004.

\bibitem{pakovich_solution_2014}
Fedor Pakovich.
\newblock {Solution of the parametric center problem for the Abel differential
  equation}.
\newblock {\em {arXiv}:1407.0150 [math]}, July 2014.
\newblock {arXiv}: 1407.0150.

\end{thebibliography}

\end{document}